\documentclass[11pt]{amsart}

%
%
%
\usepackage{amsmath}
\usepackage{amssymb}
\usepackage{amsfonts}
\usepackage{amsthm}  
\usepackage{bm} 
\usepackage{latexsym}
\usepackage{graphicx}    
\usepackage{xypic} 
\usepackage{tikz} 
\usepackage{color} 
\usepackage{mathdots}
\usepackage[pdftex,colorlinks,linktocpage=true,citecolor=blue,linkcolor=blue]{hyperref}
\usepackage[nameinlink]{cleveref}

\crefname{thm}{theorem}{theorems}
\crefname{lem}{lemma}{lemmas}
\crefname{cor}{corollary}{corollaries}
\crefname{Prop}{proposition}{propositions}
\crefname{defn}{definition}{definitions}
\crefname{eg}{example}{examples}
\crefname{xca}{exercise}{exercises}
\crefname{conj}{conjecture}{conjectures}
\crefname{rmk}{remark}{remarks}
\crefname{qst}{question}{questions}
\crefname{obs}{observation}{observations}

\newtheorem{thm}{Theorem}[section]
\newtheorem{lem}[thm]{Lemma}  

\newtheorem{Prop}[thm]{Proposition}
\newtheorem{defn}[thm]{Definition}

\newtheorem{conj}[thm]{Conjecture}
\newtheorem{rmk}[thm]{Remark}

\numberwithin{equation}{section}

\newcommand{\abs}[1]{\lvert#1\rvert}
\newcommand\C{\mathbb C}    
\newcommand\R{\mathbb R}    

\newcommand\style{\mathcal}
\newcommand\tran{{}^t} 
\newcommand\st{\operatorname{St}} 
\newcommand\GL{\mathbf{GL}} 
\newcommand\SL{\mathrm{SL}} 
\newcommand\G{\mathbf{G}} 
\newcommand\SO{\mathbf{SO}} 
\newcommand\Hbf{\mathbf{H}} 
\newcommand\X{\mathbf{X}} 
\newcommand\Abf{\mathbf{A}} 
\newcommand\Pbf{\mathbf{P}} 
\newcommand\Mbf{\mathbf{M}} 
\newcommand\wf{\mathcal W_F} 
\newcommand\lf{\mathcal L_F} 
\newcommand\wt{\widetilde} 

\DeclareMathOperator{\ran}{Im} 
\DeclareMathOperator{\ind}{Ind} 
\DeclareMathOperator{\Hom}{Hom} 
\DeclareMathOperator{\Sym}{Sym} 


%
\begin{document}
\title{Linear periods and distinguished local parameters}
\author[J.~M.~Smith]{Jerrod Manford Smith}
\address{Department of Mathematics \& Statistics, University of Calgary, Calgary, Alberta, Canada, T2N 1N4}
\email{jerrod.smith@ucalgary.ca}
\urladdr{}
\thanks{}

\subjclass[2010]{Primary 22E50; Secondary 11F70}
\keywords{Distinguished Langlands parameter, relative discrete series, linear periods}
\date{\today}
\dedicatory{}
\begin{abstract}
Let $F$ be a nonarchimedean local field of characteristic zero.  Let $X$ be the $p$-adic symmetric space $X = H \backslash G$, where $G = \mathbf{GL}_{2n}(F)$ and $H = \mathbf{GL}_n(F) \times \mathbf{GL}_n(F)$.  We verify a conjecture of Sakellaridis and Venkatesh on the Langlands parameters of certain representations in the discrete spectrum of $X$.
\end{abstract}
\maketitle

\section{Introduction}\label{sec-intro}
Let $F$ be a nonarchimedean local field of characteristic zero.
Let $G = \GL_{2n}(F)$ and let $H = \GL_n(F) \times \GL_n(F)$.  The subgroup $H$ of $G$ is equal to the fixed points of an involution $\theta$ of $G$ and the quotient $X = H\backslash G$ is a $p$-adic symmetric space.
We are interested in the study of harmonic analysis on $X$ and its relevance to the Local Langlands Correspondence (LLC).  
In particular, we aim to understand the discrete spectrum of $X$, as a representation of $G$, within the framework developed by Sakellaridis and Venkatesh \cite{sakellaridis--venkatesh2017}. 
The goal of this note is to understand the construction of relative discrete series for $X$ via \cite[Theorem 6.3]{smith2018}  (see \Cref{thm-smith}) in terms of the associated Langlands parameters, and to provide evidence for the truth of the general conjectures on the discrete spectrum formulated by Sakellaridis and Venkatesh \cite[Conjecture 16.2.2]{sakellaridis--venkatesh2017}.

Let $(\pi,V)$ be an admissible representation of $G$ on a complex vector space $V$.  
If there is a nonzero element $\lambda \in \Hom_H(\pi,1)$, then we say that $(\pi,V)$ admits a (local) linear period and we refer to the $H$-invariant linear form $\lambda$ on $V$ as a linear period.  
This terminology parallels the language used in the global setting.
If $(\pi,V)$ admits a nonzero linear period then we say that $\pi$ is $H$-distinguished.
It is exactly the $H$-distinguished representations of $G$ that are relevant to the harmonic analysis on $X$.
The irreducible direct summands of the space $L^2(X)$ of square integrable $\C$-valued functions on $X$ are referred to as relative discrete series representations.
If $(\pi,V)$ is an $H$-distinguished discrete series representation of $G$, then $(\pi,V)$ is automatically a relative discrete series representation for $X$ \cite{kato--takano2010}.
The relative discrete series representations constructed in \cite[Theorem 6.3]{smith2018} consist of certain tempered representations of $G$ that do not appear in the discrete spectrum of the group $G$.

For $H$-distinguished supercuspidal representations of $G$, the main result of this paper is due to Jiang, Nien and Qin \cite{jiang--nien--qin2008,jiang--nien--qin2010}.   We refer the reader to \cite[Theorem 1.1]{jiang--nien--qin2010} for a complete statement of their characterization of $H$-distinction for supercuspidal representations of $G$ (in terms of $L$-functions, local Langlands functorial transfer, and the Shalika model).
Our work relies heavily on the results of Jiang, Nien and Qin and the work of Matringe \cite{matringe2014a}.

We now take a moment to briefly outline the contents of the paper.  In the next section, we fix notation and conventions.  
We recall the notion of a distinguished parameter and the relevant conjecture of Sakellaridis--Venkatesh in \Cref{sec-dist-parameters}.
Linear periods and the discrete spectrum of $X$ are discussed in \Cref{sec-rds}; in this section, we recall several known results including the work of Matringe \cite{matringe2014a} and the author's construction of relative discrete series \cite{smith2018}.
In the final part (\Cref{sec-parameters}), we study the $L$-parameters of the relative discrete series studied in \cite{smith2018} and prove the main result \Cref{thm-rds-dist-parameter}.

\subsection{Notation}\label{sec-notation}
Let $\wf$ be the Weil group of $F$.  The local Langlands group of $F$ is the Weil-Deligne group $\lf = \wf \times \SL(2,\C)$. 
For any integer $k \geq 1$, denote by $\style{S}(k)$ the unique (up to equivalence) $k$-dimensional irreducible representation of $\SL(2,\C)$.  Recall that $\style{S}(k) \cong \Sym^{k-1}(\C^2)$, where $\C^2$ is the standard representation of $\SL(2,\C)$.
We denote the rank-$k$ general linear group $\GL_k(F)$ by $G_k$.
Let $H_{2k}$ be the subgroup $\GL_k(F) \times \GL_k(F)$ of $G_{2k}$.

Let $P$ be a parabolic subgroup of $G$ with Levi subgroup $M$ and unipotent radical $N$.  
 Given a smooth representation $(\rho, V_\rho)$ of $M$ we may inflate $\rho$ to a representation of $P$, also denoted $\rho$, by declaring that $N$ acts trivially.
 We define the representation $\iota_P^G \rho$ of $G$ to be the (normalized) parabolically induced representation $\ind_P^G (\delta_P^{1/2} \otimes \rho)$. 
We will also use the Bernstein--Zelevinsky \cite{bernstein--zelevinsky1977,zelevinsky1980} notation $\pi_1 \times \ldots \times \pi_{k}$ for the (normalized) parabolically induced representation $\iota_{P_{(m_1,\ldots,m_k)}}^{G_n}(\pi_1\otimes \ldots \otimes \pi_k)$ of $G_n$ obtained from the standard (block-upper triangular) parabolic subgroup $P_{(m_1,\ldots,m_k)}$ and representations $\pi_{j}$ of $G_{m_j}$, where $\sum_{j=1}^k m_j = n$.

Let $\rho$ be an irreducible supercuspidal representation of $G_r$ and let $k \geq 1$ be a positive integer.
Let $\nu$ denote the character of $G_r$ given by $g\mapsto \abs{\det(g)}_F$, where $|\cdot|_F$  is the normalized absolute value on $F$ and $r$ is understood from context.
The unique irreducible quotient $\st(k,\rho)$ of the induced representation
\[
\nu^{\frac{1-k}{2}} \rho \times \ldots \times \nu^{\frac{k-1}{2}} \rho
\]
is a discrete series representation of $G_{kr}$ \cite[Theorem 9.3]{zelevinsky1980}.  We refer to the representations $\st(k,\rho)$ as generalized Steinberg representations. The usual Steinberg representation of $G_n$ is $\st(n,1)$.
Note that $\st({k_1},\rho_1)$ is equivalent to $\st({k_2},\rho_2)$ if and only if $k_1 = k_2$ and $\rho_1$ is equivalent to $\rho_2$ \cite[Theorem 9.7(b)]{zelevinsky1980}.
\subsection{Distinguished parameters}\label{sec-dist-parameters}
In this section, suppose that $\G$ is an arbitrary connected reductive group that is defined and split over $F$.
Let $G^\vee$ be the complex dual group of $\G$.\footnote{Since $\G$ is split over $F$, $\wf$ acts trivially on $G^\vee$ and without loss of generality the $L$-group ${}^LG$ of $\G$ coincides with the dual group $G^\vee$.}
 An $A$-parameter, or an Arthur parameter, for $\G$ is a continuous homomorphism $\psi: \lf\times \SL(2,\C) \rightarrow G^\vee$ such that
 \begin{itemize}
 \item  the restriction $\psi\vert_{\wf}$ of $\psi$ to the Weil group $\wf$ is bounded,
 \item  the image of $\psi\vert_{\wf}$ consists of semisimple elements of $G^\vee$,
 \item  and the restriction of $\psi$ to each of the two $\SL(2,\C)$ factors is algebraic.\footnote{The second $\mathrm{SL}(2,\C)$ factor is referred to as the Arthur $\SL(2)$.}
 \end{itemize}
The $L$-parameter, or Langlands parameter, for $\G$ associated to the $A$-parameter $\psi$ is the admissible homomorphism $\phi_\psi : \lf \rightarrow G^\vee$ given by $\phi_\psi(w,g) = \psi(w,g,d_w)$, where 
\begin{align*}
	d_w = \left( \begin{matrix} |w|^{\frac{1}{2}} & 0 \\ 0 & |w|^{-\frac{1}{2}} \end{matrix}\right) \in \SL(2,\C), 
\end{align*}
and $|\cdot|: \wf \rightarrow \R_{>0}$ is the absolute value determined by the normalized absolute value $|\cdot|_F$ on $F^\times$ via local class field theory.  Langlands parameters of the form $\phi_\psi$ are said to be of Arthur type \cite[Section 3.6]{cunningham--fiori--mracek--moussaoui--xu2018-pp}.

Let $\X$ be a homogeneous spherical variety for $\G$, that is, $\X$ is a (normal) variety defined over $F$ equipped with a transitive $\G$ action such that a Borel subgroup $\mathbf{B}$ of $\G$ has a Zariski-dense orbit.
Note that symmetric $\G$-varieties are spherical.  
Let $\X^\dagger$ denote the open $\mathbf{B}$ orbit in $\X$.  Define $\Pbf_0$ to be the standard parabolic subgroup $\G$ that stabilizes $\X^\dagger$, that is, 
$\mathbf{P}_0 = \{ g \in \G : \X^\dagger = \X^\dagger g \}$. 
Let $x_0 \in \X^\dagger(F)$ and let $\Hbf$ be the stabilizer of $x_0$ in $\G$.
Then $\X \cong \Hbf\backslash \G$.  For a fixed $x_0$ there is a natural choice of Levi subgroup $\Mbf_0$ of $\Pbf_0$ \cite[$\S2.1$]{sakellaridis--venkatesh2017}.

\begin{rmk}
If $\X = \G^\theta \backslash \G$ is a symmetric variety, defined by an $F$-involution $\theta$ of $\G$, then there is a natural choice of $x_0 = \G^\theta \cdot e$.	The parabolic subgroup $\Pbf_0$ is a minimal $\theta$-split parabolic subgroup and the Levi subgroup $\Mbf_0 = \Pbf \cap \theta(\Pbf_0)$ is $\theta$-stable.
Recall that a parabolic subgroup $\Pbf$ of $\G$ is $\theta$-split if $\theta(\Pbf)$ is opposite to $\Pbf$.
\end{rmk}

Before stating their conjectures on the parameters of relative discrete series, we recall Sakellaridis and Venkatesh's definition of a \textit{distinguished morphism}.  We refer the reader to \cite[$\S$2.1--2.2, 3.2]{sakellaridis--venkatesh2017} for more detail and in particular for the definition of the spherical roots.
Fix a maximal $F$-split torus $\Abf_0$ of $\G$ contained in $\mathbf{B}\cap \Mbf_0$.
Let $A^* \subset G^\vee$ be the complex dual torus of $\Abf_0$.
Define $\Abf_X$ to be the torus $\Abf_0 / (\Abf_0\cap\Hbf)$.
Let $A_X^*$ be the complex torus dual to $\Abf_X$.
Dualizing the surjective map $\Abf_0 \rightarrow \Abf_X$ gives rise to a finite-to-one map $A_X^* \rightarrow A^*$.
Let $G_X^\vee$ be the complex dual group of the spherical variety $\X$ defined by Sakellaridis and Venkatesh, defined under the technical assumption of \cite[Proposition 2.2.2]{sakellaridis--venkatesh2017} on the spherical roots.   
The torus $A_X^*$ is a maximal torus of $G_X^\vee$.
The aim of \Cref{def-dist-mor} is to produce an appropriate notion of extending the map $A_X^* \rightarrow A^*$ to a homomorphism from $G_X^\vee$ to $G^\vee$. 
Let $\Sigma_X$ denote the set of spherical roots of $\X$.  Let $\gamma \in \Sigma_X$. It is known \cite{akhiezer1983,brion2001} that either
\begin{itemize}
	\item $\gamma$ is proportional to a postive root $\alpha$ of $\Abf_0$ in $\G$, or;
	\item $\gamma$ is proportional to the sum $\alpha + \beta$ of two positive roots $\alpha$ of $\Abf_0$ in $\G$ such that $\alpha$ is orthogonal to $\beta$ and both $\alpha$ and $\beta$ are contained in a system of simple roots (but not necessarily the simple roots corresponding to $\mathbf{B}$).
\end{itemize}
In the first case, set $\gamma_0 = \alpha$ and in the second case set $\gamma_0 = \alpha+\beta$ (note that in the second case $\alpha + \beta$ is not a root of $\Abf_0$ in $\G$; moreover, $\alpha$ and $\beta$ are not unique, but there is a canonical choice \cite[Corollary 3.1.4]{sakellaridis--venkatesh2017}).
The roots $\alpha$ and $\beta$ are referred to as the associated roots of $\gamma_0$ (if $\gamma_0 = \alpha$, then $\alpha$ is the associated root).
Sakellaridis and Venkatesh call the set $\Delta_X = \{ \gamma_0 : \gamma \in \Sigma_X\}$ the simple normalized spherical roots of $\X$.

\begin{defn}\label{def-dist-mor}
	A distinguished morphism $\xi: G_X^\vee \times \SL(2,\C) \rightarrow G^\vee$ is a group homomorphism such that
	\begin{enumerate}
		\item the restriction of $\xi$ to $G_X^\vee$ extends the canonical map of tori $A_X^* \rightarrow A^*$
		\item for every simple normalized spherical root $\gamma_0 \in \Delta_X$, the corresponding root space of the Lie algebra $\mathfrak{g}_X^\vee$ maps into the sum of root spaces of its associated roots under the differential of $\xi$
		\item the restriction of $\xi$ to the $\SL(2,\C)$ factor is a principal morphism into $M_0^\vee \subset G^\vee$ with weight $2\varrho_{M_0}: \mathbb{G}_m \rightarrow G^\vee$, where $ \mathbb{G}_m$ is identified with the maximal torus of $\SL(2,\C)$ via $a \mapsto \left(\begin{matrix}a&0\\0&a^{-1}\end{matrix}\right)$ and $2\varrho_{M_0}$ is the sum of the positive roots of $\Abf_0$ in $\Mbf_0$.
	\end{enumerate}
\end{defn}

\begin{rmk}
Sakellaridis and Venkatesh proved that distinguished morphisms are unique up to $A^*$ conjugacy \cite[Proposition 3.4.3]{sakellaridis--venkatesh2017}. Knop and Schalke have proved the existence of distinguished morphisms in full generality \cite{knop--schalke2017}.
\end{rmk}

\begin{defn}
An $A$-parameter  $\psi: \lf \times \SL(2,\C) \rightarrow G^\vee$ is $X$-distinguished if it factors through the distinguished morphism $\xi: G_X^\vee \times \SL(2,\C) \rightarrow G^\vee$, that is, there exists a tempered (that is, bounded on $\wf$) $L$-parameter $\psi_X: \lf \rightarrow G_X^\vee$ such that $\psi(w,g) = \xi(\psi_X(w),g)$.
\end{defn}

We are now in a position to state the conjecture of Sakellaridis and Venkatesh on the parameters of relative discrete series representations (see \cite[Conjectures 16.2.2, 16.5.1]{sakellaridis--venkatesh2017} for statements of the full local conjectures).  Recall that an $L$-parameter $\phi: \lf \rightarrow G^\vee$ is elliptic if and only if the image of $\phi$ is not contained in any proper parabolic subgroup of $G^\vee$.

\begin{conj}[Sakellaridis and Venkatesh]\label{conj-sv}
	A relative discrete series representation $\pi$ in $L^2(X)$ is contained in an Arthur packet corresponding to an $X$-distinguished $A$-parameter $\psi: \lf \times \SL(2,\C) \rightarrow G^\vee$ such that the $L$-parameter $\psi_X: \lf \rightarrow  G_X^\vee$ is  elliptic.
\end{conj}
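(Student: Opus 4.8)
The plan is to spell out \Cref{conj-sv} completely for $X=\Hbf\backslash\G$ with $\G=\GL_{2n}$ and $\Hbf=\GL_n\times\GL_n$, and then match it, representation by representation, against the explicit list of relative discrete series furnished by \Cref{thm-smith}. First I would fix the data of \Cref{def-dist-mor} for this $X$. The restricted root system of $X$ is of type $C_n$; it is reduced, so the Levi $\Mbf_0$ is a maximal torus of $\G$, $M_0^\vee$ is a maximal torus of $G^\vee=\GL_{2n}(\C)$, $2\varrho_{M_0}=0$, and therefore condition (3) of \Cref{def-dist-mor} forces the restriction of the distinguished morphism to the $\SL(2,\C)$ factor to be trivial. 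One then checks (this is essentially contained in \cite[Section 3]{sakellaridis--venkatesh2017}, or can be verified directly against conditions (1) and (2)) that $G_X^\vee=\Sp_{2n}(\C)$ and that the distinguished morphism is the map $\xi\colon\Sp_{2n}(\C)\times\SL(2,\C)\to\GL_{2n}(\C)$ which is the standard representation on $\Sp_{2n}(\C)$ and is trivial on $\SL(2,\C)$: condition (1) just identifies $A_X^*$ with the standard maximal torus of $\Sp_{2n}(\C)$ compatibly with the canonical map $A_X^*\to A^*$, and condition (2) follows from the explicit form of the root spaces of type $C_n$. In particular the $X$-distinguished $A$-parameters are exactly the homomorphisms $\psi=\xi\circ(\psi_X\times\id)$ with $\psi_X\colon\lf\to\Sp_{2n}(\C)$ a tempered $L$-parameter -- equivalently, the tempered $2n$-dimensional symplectic parameters of $\lf$ -- and for any such $\psi$ one has $\phi_\psi=\psi=\psi_X$ as $\GL_{2n}(\C)$-valued maps, since $\xi$ kills the Arthur $\SL(2,\C)$ on which $d_w$ acts.

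Next I would compute the parameter of a relative discrete series $\pi$ from \Cref{thm-smith}. Such a $\pi$ is of the form $\pi=\delta_1\times\cdots\times\delta_k$ with $k\ge 2$ and $\delta_1,\ldots,\delta_k$ pairwise inequivalent discrete series of $G_{2n_1},\ldots,G_{2n_k}$, $\sum_i n_i=n$, each $\delta_i$ being $H_{2n_i}$-distinguished. Because the $\delta_i$ are pairwise inequivalent, $\pi$ is irreducible, and compatibility of the local Langlands correspondence for $\GL$ with parabolic induction gives $\phi_\pi=\bigoplus_{i=1}^{k}\phi_{\delta_i}$, where each $\phi_{\delta_i}$ is an irreducible $2n_i$-dimensional representation of $\lf$, bounded on $\wf$ because $\delta_i$ is unitary. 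I then invoke the characterization of $H$-distinction for discrete series of general linear groups, due to Matringe \cite{matringe2014a} (extending the supercuspidal case of Jiang--Nien--Qin \cite{jiang--nien--qin2010}), together with its interpretation via the Shalika model and poles of the exterior-square $L$-function: $H_{2n_i}$-distinction of $\delta_i$ forces $L(s,\delta_i,\wedge^2)$ to have a pole at $s=0$, hence $\phi_{\delta_i}$ to carry a nonzero $\lf$-invariant alternating form, which is nondegenerate since $\phi_{\delta_i}$ is irreducible. Thus each $\phi_{\delta_i}$ is self-dual of symplectic type, and taking the orthogonal direct sum of these forms exhibits $\phi_\pi$ as the composite of the standard embedding $\Sp_{2n}(\C)\hookrightarrow\GL_{2n}(\C)$ with a homomorphism $\psi_X\colon\lf\to\Sp_{2n}(\C)$ that is bounded on $\wf$.

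The rest is then formal. Put $\psi=\xi\circ(\psi_X\times\id)\colon\lf\times\SL(2,\C)\to G^\vee$; by construction $\psi$ equals $\phi_\pi$ on $\lf$ and is trivial on the Arthur $\SL(2,\C)$, it is an $A$-parameter (bounded on $\wf$, semisimple image, algebraic on each $\SL(2,\C)$ factor, all inherited from $\phi_\pi$ and $\psi_X$), and it is $X$-distinguished with the tempered parameter $\psi_X$. Its associated $L$-parameter satisfies $\phi_\psi(w,g)=\psi(w,g,d_w)=\phi_\pi(w,g)$, so $\phi_\psi=\phi_\pi$; since $A$-packets for $\GL_{2n}$ are singletons, the packet attached to $\psi$ is exactly $\{\pi\}$, and hence $\pi$ lies in the Arthur packet of the $X$-distinguished $A$-parameter $\psi$. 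Finally, $\psi_X$ is elliptic: its image lies in a proper parabolic subgroup of $\Sp_{2n}(\C)$ if and only if the symplectic $\lf$-space $\phi_\pi$ has a nonzero proper invariant isotropic subspace (parameters of $\lf$ being semisimple), but every invariant subspace of $\phi_\pi$ is a subsum $\bigoplus_{i\in S}\phi_{\delta_i}$, and since the irreducible summands $\phi_{\delta_i}$ are pairwise non-isomorphic and self-dual the symplectic form must pair each $\phi_{\delta_i}$ with itself, so $\phi_\pi$ is an \emph{orthogonal} direct sum of nondegenerate symplectic subspaces and no such subsum is isotropic unless it is $0$. Thus $\psi_X$ is elliptic, as \Cref{conj-sv} predicts; note that the distinctness of the $\delta_i$ -- the same hypothesis that makes $\pi$ irreducible -- is precisely what is needed here, and that if two of the $\delta_i$ coincided then $\psi_X$ would factor through a proper parabolic.

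The main obstacle is the first step. The passage from an $L$-parameter valued in $\Sp_{2n}(\C)$ to a genuine $X$-distinguished $A$-parameter, and the verification of ellipticity, are both bookkeeping with the self-dual, pairwise-distinct, irreducible constituents $\phi_{\delta_i}$; the real work is (a) determining $G_X^\vee$ and $\xi$ for this symmetric variety -- above all the triviality of $\xi$ on the Arthur $\SL(2,\C)$, equivalently the fact that $\Mbf_0$ is a maximal torus, which is exactly what forces $\phi_\psi$ to be tempered rather than of Speh type -- and (b) converting the arithmetic input of \cite{matringe2014a} and \cite{jiang--nien--qin2010} into the clean statement that the Langlands parameter of an $H$-distinguished discrete series of $G_{2m}$ is of symplectic type, which is what places $\phi_\pi$ inside $\xi(G_X^\vee)$.
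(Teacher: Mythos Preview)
Your proposal is correct and follows the same overall architecture as the paper: identify $G_X^\vee=\Sp_{2n}(\C)$ with $\xi$ trivial on the Arthur $\SL(2,\C)$ (the paper's \Cref{lem-dual-morph}), show each irreducible constituent $\phi_{\delta_i}$ of $\phi_\pi$ is symplectic, take the orthogonal direct sum to land in $\Sp_{2n}(\C)$, and then argue ellipticity from the fact that the summands are irreducible, self-dual, and pairwise non-isomorphic.

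The one place where your route differs from the paper's is in establishing that each $\phi_{\delta_i}$ is of symplectic type. The paper (\Cref{prop-dist-gen-st-parameter}) writes $\delta_i=\st(k_i,\rho_i)$, splits into the cases $k_i$ odd and $k_i$ even via \Cref{thm-matringe-lin}, determines separately whether $\phi_{\rho_i}$ is orthogonal or symplectic, and then uses the tensor structure $\phi_{\delta_i}=\phi_{\rho_i}\otimes\style S(k_i)$ together with the parity of the invariant form on $\style S(k_i)$ to conclude. You instead invoke \Cref{thm-matringe-ext-square} directly: the pole of $L(s,\delta_i,\wedge^2)$ at $s=0$ gives a nonzero $\lf$-invariant vector in $\wedge^2\phi_{\delta_i}$, hence (by irreducibility) a nondegenerate invariant alternating form. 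This is cleaner and avoids the case split entirely; the paper's argument, on the other hand, makes visible exactly which type $\rho_i$ must be in each parity, which is extra information but not needed for the conclusion. Your ellipticity argument via isotropic invariant subspaces is the same as the paper's Levi-factor argument $\phi_\pi=\phi_1\oplus\phi_0\oplus\phi_1^\vee$, just phrased in the language of the underlying symplectic space rather than the parabolic. One small slip: you write $k\ge 2$, but \Cref{thm-smith} allows $d=1$ as well; the argument goes through unchanged in that case.
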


 Our ultimate aim is to prove that the relative discrete series representations for $X = \GL_n(F) \times \GL_n(F) \backslash \GL_{2n}(F)$ considered in \cite{smith2018} satisfy \Cref{conj-sv}.
 In the case that $\G = \GL_{2n}$, Arthur packets are $L$-packets and are singleton sets. This fact greatly simplifies the situation; moreover, we ultimately only need to consider $L$-parameters because the relative discrete series produced in \cite{smith2018} are all tempered. 
 
\section{Linear periods and relative discrete series}\label{sec-rds}
For the remainder of the paper, we let $G = \GL_{2n}(F)$ and let $H = \GL_{n}(F) \times \GL_{n}(F)$.
Let $X=H \backslash G$ be the $p$-adic symmetric space associated to the symmetric pair $(G,H)$.
In this section, we recall the main result of \cite{smith2018} and some background on local linear periods.
For the symmetric pair $(G,H)$, multiplicity-one is known due to work of Jacquet and Rallis \cite{jacquet--rallis1996}.

\begin{thm}[Jacquet--Rallis]
Let	$(\pi,V)$ be an irreducible admissible representation of $G$. The dimension of the space $\Hom_H(\pi,1)$ is at most one.  If $\dim \Hom_H(\pi,1) = 1$, then $\pi$ is self-contragredient, that is, $\wt \pi \cong \pi$.
\end{thm}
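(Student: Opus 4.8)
The plan is to establish the two assertions separately: first multiplicity one, then self-duality. For the multiplicity-one statement, the standard route is a Gelfand pair argument. One shows that the pair $(G,H)$ satisfies a Gelfand property by exhibiting an anti-involution $\sigma$ of $G$ that preserves $H$ and such that every $H\times H$-bi-invariant distribution on $G$ is fixed by $\sigma$. For $G = \GL_{2n}(F)$ and $H = \GL_n(F)\times\GL_n(F)$ embedded block-diagonally, a natural candidate is transpose composed with conjugation by a suitable permutation/anti-diagonal matrix that swaps the two blocks; one checks this stabilizes $H$ and acts trivially on the relevant double cosets. The heart of the argument is then a statement about distributions: any distribution on $G$ that is invariant under left and right translation by $H$ is invariant under $\sigma$. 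This is proved by a geometric analysis of the $H\times H$ orbits on $G$ — decomposing $G$ into locally closed $\sigma$-stable pieces, using Bruhat-type filtration and the theory of $\ell$-groups, and reducing (via Frobenius reciprocity and shifting to orbits) to checking that no orbit supports a nonzero distribution with a nontrivial character of the relevant stabilizer. In fact, for this particular pair, the cleanest path is simply to invoke the theorem of Jacquet and Rallis \cite{jacquet--rallis1996}, where exactly this is carried out; since the excerpt attributes the theorem to them, one would cite their proof rather than reproduce it.

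**For the self-duality statement,** assuming $\dim\Hom_H(\pi,1) = 1$, I would argue as follows. The contragredient $\wt\pi$ also admits a nonzero $H$-invariant functional whenever $\pi$ does: indeed, by the Gelfand-pair machinery, the anti-involution $\sigma$ maps an $H$-distinguished representation to an $H$-distinguished representation, and $\pi^\sigma := \pi\circ\sigma$ is isomorphic to $\wt\pi$ precisely because $\sigma$ is an anti-automorphism (so $\pi^\sigma$ is a representation on the same space with the multiplication reversed, which is $\wt\pi$ up to the outer automorphism — and for $\GL_N$, $g\mapsto \tran g^{-1}$ realizes $\wt\pi$, so composing with the transpose-type $\sigma$ shows $\pi^\sigma\cong\wt\pi$). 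Hence $\Hom_H(\wt\pi,1)\neq 0$, so $\wt\pi$ is also $H$-distinguished. Now combine this with multiplicity one: one uses the general fact that an irreducible $\pi$ is $H$-distinguished if and only if $\wt\pi$ is $H$-distinguished, together with the uniqueness, to pin down that $\pi$ and $\wt\pi$ lie in the same "distinguished" family; to actually force $\wt\pi\cong\pi$ one typically pairs the $H$-invariant functional on $\pi$ with the one on $\wt\pi$ to produce a nonzero $H\times H$-invariant functional on $\pi\otimes\wt\pi$, equivalently a nonzero element of $\Hom_{G}(\pi\otimes\wt\pi, C^\infty(H\backslash G))$, and then notes that $\pi\otimes\pi'$ for irreducible $\pi,\pi'$ admits such a functional only when $\pi'\cong\wt\pi$ — but we already know $\pi'=\wt\pi$ works, so the symmetry of the period forces the isomorphism. (Concretely: the existence of the bilinear form realizing the period, combined with its $\sigma$-invariance from multiplicity one, yields $\pi\cong\pi^\sigma\cong\wt\pi$.)

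**The main obstacle** is the distributional/geometric input for multiplicity one: verifying that the chosen anti-involution $\sigma$ annihilates all $H$-bi-invariant distributions requires a careful orbit-by-orbit analysis (identifying the finitely many relevant $H\times H$-orbits on $G$, computing their stabilizers, and checking a "no nonzero equivariant distribution" condition on each, including on non-open orbits via a Bernstein-localization / shift argument). This is exactly the technical core of Jacquet–Rallis, and reproducing it would be lengthy; in the interest of brevity the right move is to cite \cite{jacquet--rallis1996} for multiplicity one and then give the short self-contragredience deduction as above. A minor secondary point to check is that $\sigma$ genuinely stabilizes $H$ (up to conjugacy within $H$) and induces the contragredient, i.e. that the block-swap composed with transpose-inverse is inner on $G$ — which it is, since all automorphisms of $\GL_{2n}$ of this determinant-type are, so no subtlety survives there.
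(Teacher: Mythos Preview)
The paper gives no proof of this statement at all; it is simply attributed to Jacquet and Rallis with a citation to \cite{jacquet--rallis1996}. Your proposal already anticipates this and recommends the same course of action, so in that sense you agree with the paper.

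Your sketch of the multiplicity-one argument via the Gelfand--Kazhdan criterion (an anti-involution $\sigma$ fixing $H$, invariance of bi-$H$-invariant distributions, orbit-by-orbit analysis) is the correct outline and is indeed what Jacquet--Rallis carry out.

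Your self-contragredience paragraph, however, has a genuine gap. You correctly observe that $\wt\pi$ is also $H$-distinguished, but the passage from there to $\pi\cong\wt\pi$ is circular as written: pairing the two functionals produces an $H\times H$-invariant form on $\pi\otimes\wt\pi$, and you then invoke that such a form exists only for $\pi'\cong\wt\pi$ --- but $\pi'=\wt\pi$ is what you started with, so nothing is concluded. The parenthetical ``$\pi\cong\pi^\sigma\cong\wt\pi$'' is closer to the real argument, but $\pi\circ\sigma$ is not a representation when $\sigma$ is an anti-automorphism, so $\pi^\sigma$ needs to be defined carefully (e.g.\ as $g\mapsto\pi(\sigma(g)^{-1})$), and the isomorphism $\pi\cong\pi^\sigma$ is exactly the nontrivial step: it comes from showing that the \emph{relative character} (a specific bi-$H$-invariant distribution built from $\lambda$ and $\wt\lambda$) is $\sigma$-invariant and determines $\pi$, which is more than the bare Gelfand--Kazhdan statement. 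Since you ultimately defer to \cite{jacquet--rallis1996} anyway, this does not affect your bottom line, but the sketch as it stands would not serve as a standalone proof of the second assertion.
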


\begin{rmk}
Jacquet and Rallis also proved that if an irreducible admissible representation $(\pi,V)$ of $G$ admits a nonzero Shalika model, then $\pi$ is $H$-distinguished.
The converse for irreducible supercuspidal representations was obtained by Jiang, Nien, and Qin \cite[Theorem 5.5]{jiang--nien--qin2008}.
Sakellaridis and Venkatesh \cite[Section 9.5]{sakellaridis--venkatesh2017}, and independently Matringe \cite[Theorem 5.1]{matringe2014a}, proved the converse for irreducible $H$-relatively integrable and relative discrete series representations.
\end{rmk}

As above, for a positive integer $m$ we write $G_{m} = \GL_m(F)$ and, if $m$ is even, $H_{m} = \GL_{m/2}(F) \times \GL_{m/2}(F)$.
Let $\pi$ be a discrete series representation of $G_m$.
Let $L(s,\pi\times \pi)$ be the local Rankin--Selberg convolution $L$-function.
Shahidi \cite[Lemma 3.6]{shahidi1992} proved the local identity
\begin{align}\label{eq-RS-L-fcn-factors}
L(s,\pi\times \pi) = L(s,\pi, \wedge^2) L(s,\pi, \Sym^2),
\end{align}
where $L(s,\pi, \wedge^2)$, respectively $L(s,\pi, \Sym^2)$, denotes the exterior square, respectively symmetric square, $L$-function of $\pi$ defined via the Local Langlands Correspondence (LLC).
The $L$-function $L(s,\pi\times\pi)$ has a simple pole at $s=0$ if and only if $\pi$ is self-contragredient \cite{jacquet--piatetskii-shapiro--shalika1983}.
Note that $L(s,\pi, \wedge^2)$ cannot have a pole when $m$ is odd (see, for instance, \cite[Section 9]{bump2004}, \cite[Theorem 4.5]{kewat--raghunathan2012}).
For all discrete series representations of $G_m$, and all irreducible generic representations of $G_{2m}$, the associated Jacquet--Shalika  and Langlands--Shahidi local exterior square $L$-functions agree with with the exterior square $L$-function defined via the LLC \cite[Theorem 4.3 in $\S$4.2]{henniart2010}, \cite[Theorems 1.1 and 1.2]{kewat--raghunathan2012}.

Matringe proved the following results which characterize the $H$-distinction of discrete series representations of $G$.  The first result appears as {\cite[Proposition 6.1]{matringe2014a}}, and the second appears as {\cite[Theorem 6.1]{matringe2014a}}.

\begin{thm}[Matringe]\label{thm-matringe-ext-square}
Suppose that $\pi$ is a square integrable representation of $G$, then $\pi$ is $H$-distinguished
if and only if the exterior square $L$-function $L(s,\pi,\wedge^2)$ has a pole at $s=0$.
\end{thm}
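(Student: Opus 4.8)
\emph{Sketch of proof.} The plan is to deduce the statement from the chain of equivalences
\[
\pi\ \text{is}\ H\text{-distinguished}
\ \Longleftrightarrow\
\pi\ \text{has a Shalika model}
\ \Longleftrightarrow\
L(s,\pi,\wedge^2)\ \text{has a pole at}\ s=0,
\]
valid for $\pi$ square integrable, the theorem being the equivalence of the two outer conditions. The implication ``Shalika model $\Rightarrow$ $H$-distinguished'' is already available from Jacquet and Rallis \cite{jacquet--rallis1996}, who in addition force any $H$-distinguished $\pi$ to be self-contragredient. First I would combine this with \eqref{eq-RS-L-fcn-factors} and the fact that $L(s,\pi\times\pi)$ has a \emph{simple} pole at $s=0$ exactly when $\pi$ is self-contragredient: since $L$-functions of this kind have no zeros, for self-contragredient $\pi$ precisely one of the two factors $L(s,\pi,\wedge^2)$ and $L(s,\pi,\Sym^2)$ has a pole at $s=0$, and it is simple. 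So the real content is that this pole lands on the exterior square factor exactly when $\pi$ is $H$-distinguished.

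For the equivalence ``Shalika model $\Leftrightarrow$ exterior square pole'' I would use the Jacquet--Shalika integral representation: $L(s,\pi,\wedge^2)$ generates the fractional ideal spanned by the Jacquet--Shalika zeta integrals of $\pi$, integrals that pair matrix coefficients of $\pi$ against the Shalika subgroup; a pole at $s=0$ forces one of them to diverge there, and extracting the leading Laurent term yields a nonzero Shalika functional, while conversely a Shalika functional produces the pole. For $\pi$ discrete series the Jacquet--Shalika exterior square $L$-function coincides with the one attached through the Local Langlands Correspondence \cite{henniart2010,kewat--raghunathan2012}, so the conclusion transfers to $L(s,\pi,\wedge^2)$ as in the statement. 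As a check, writing $\pi=\st(k,\rho)$ for a (necessarily self-contragredient) supercuspidal $\rho$ of some $G_r$ and decomposing $\wedge^2(\phi_\rho\otimes\style{S}(k))=(\wedge^2\phi_\rho\otimes\Sym^2\style{S}(k))\oplus(\Sym^2\phi_\rho\otimes\wedge^2\style{S}(k))$, one finds $L(s,\st(k,\rho),\wedge^2)$ has a pole at $s=0$ precisely when $k$ is odd and $L(s,\rho,\wedge^2)$ has a pole, or $k$ is even and $L(s,\rho,\Sym^2)$ has a pole, matching exactly the combinatorics of when $\st(k,\rho)$ carries a Shalika period.

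The hard part will be the remaining implication ``$H$-distinguished $\Rightarrow$ Shalika model'', which relates two a priori unrelated period problems. One route: an $H$-distinguished discrete series representation of $G$ is automatically a relative discrete series representation for $X$ \cite{kato--takano2010}, so one may apply the converse of the Jacquet--Rallis theorem for relative discrete series, namely that every $H$-relatively square integrable representation admits a Shalika model \cite{matringe2014a,sakellaridis--venkatesh2017}. A more self-contained route is a Mackey-theoretic comparison, on restriction to the mirabolic subgroup $P$ of $G$, of the linear-period and Shalika-period problems: filter $\pi\vert_P$ by its Bernstein--Zelevinsky derivatives, analyze the $(H\cap P)$-orbits and the Shalika-type orbits layer by layer, and reduce to the supercuspidal building block $\rho$, where the coincidence of $H$-distinction and the existence of a Shalika functional is the theorem of Jiang, Nien and Qin \cite{jiang--nien--qin2008}. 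The delicate point in this second route is to control the many derivative contributions and to rule out the orthogonal-type case --- i.e.\ to show directly that a discrete series $\pi$ with $L(s,\pi,\Sym^2)$ regular at $s=0$ cannot be $H$-distinguished --- with the filtration engineered so that the only surviving contribution to $\Hom_H(\pi,1)$ is accounted for by a Shalika functional.
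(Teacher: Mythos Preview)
The paper does not give a proof of this theorem; it is simply quoted from Matringe \cite[Proposition 6.1]{matringe2014a} and used as a black box. So there is no in-paper argument to compare your proposal against, and your sketch should really be read as an outline of Matringe's proof rather than as an alternative to anything in this paper.

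As such an outline, your sketch is broadly accurate: the route through Shalika models and the Jacquet--Shalika integral, the compatibility with the LLC exterior square $L$-function \cite{henniart2010,kewat--raghunathan2012}, and the reduction via the Bernstein--Zelevinsky filtration on the mirabolic to the supercuspidal case of Jiang--Nien--Qin are exactly the ingredients Matringe uses. One caution about your ``first route'' for the implication ``$H$-distinguished $\Rightarrow$ Shalika model'': you invoke the converse of Jacquet--Rallis for relatively square-integrable representations from \cite[Theorem 5.1]{matringe2014a} and \cite[\S9.5]{sakellaridis--venkatesh2017}, but in Matringe's paper that result and Proposition 6.1 are proved together from the same mirabolic analysis, so citing one to prove the other is not genuinely independent. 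Your ``second route'' --- the direct mirabolic comparison of linear and Shalika periods --- is the actual content of Matringe's argument, and your identification of the delicate point (controlling the derivative contributions and excluding the orthogonal-type alternative) is on target.
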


\begin{thm}[Matringe]\label{thm-matringe-lin}
Suppose that $m= kr$ is even.  Let $\rho$ be an irreducible supercuspidal representation of $G_r$.  
Let $\pi = \st(k,\rho)$ be a generalized Steinberg representation of $G_m$.
\begin{enumerate}
\item If $k$ is odd, then $r$ must be even, and $\pi$ is $H_m$-distinguished
if and only if $L(s, \rho, \wedge^2)$ has a pole at $s=0$ if and only if $\rho$ is $H_r$-distinguished 
\item If $k$ is even, then $\pi$ is $H_m$-distinguished
if and only if $L(s, \rho, \Sym^2)$ has a pole at $s=0$. \label{thm-matringe-lin-even}
\end{enumerate}
\end{thm}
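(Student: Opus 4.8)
\section*{Proof proposal}

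The plan is to deduce everything from \Cref{thm-matringe-ext-square} together with a plethysm computation on Langlands parameters. Recall that $\rho$ is unitary (so that $\pi = \st(k,\rho)$ is square integrable); write $\phi_\rho$ for its parameter, an irreducible $r$-dimensional representation of $\wf$ with bounded image. The parameter of $\st(k,\rho)$ is $\phi_\rho\otimes\style{S}(k)$, viewed as a representation of $\lf=\wf\times\SL(2,\C)$. Since $\st(k,\rho)$ is a discrete series representation of $G_m$, \Cref{thm-matringe-ext-square} applied to $G_m$ says $\pi$ is $H_m$-distinguished if and only if $L(s,\pi,\wedge^2)$ has a pole at $s=0$, and I would compute this $L$-function through the LLC using the standard decomposition of the exterior square of an external tensor product,
\[
\wedge^2\big(\phi_\rho\otimes\style{S}(k)\big)\;=\;\big(\wedge^2\phi_\rho\otimes\Sym^2\style{S}(k)\big)\;\oplus\;\big(\Sym^2\phi_\rho\otimes\wedge^2\style{S}(k)\big).
\]

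Next I would run Clebsch--Gordan on the $\SL(2,\C)$-factors: $\Sym^2\style{S}(k)=\bigoplus_{i\ge 0}\style{S}(2k-1-4i)$ and $\wedge^2\style{S}(k)=\bigoplus_{i\ge 0}\style{S}(2k-3-4i)$, keeping only nonnegative indices. Accordingly $L(s,\pi,\wedge^2)$ factors into pieces of the form $L(s,\wedge^2\phi_\rho\otimes\style{S}(j))$ and $L(s,\Sym^2\phi_\rho\otimes\style{S}(j))$. Now for any semisimple representation $\tau$ of $\wf$ one has $L(s,\tau\otimes\style{S}(j))=L(s+\tfrac{j-1}{2},\tau)$ (only the top of each monodromy block contributes to the $L$-factor); and since $\phi_\rho$ has bounded image, so do $\wedge^2\phi_\rho$ and $\Sym^2\phi_\rho$, so $L(s,\wedge^2\phi_\rho)$ and $L(s,\Sym^2\phi_\rho)$ are holomorphic for $\Re(s)>0$ and all their poles lie on the line $\Re(s)=0$. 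Hence a factor $L(s,\wedge^2\phi_\rho\otimes\style{S}(j))$ or $L(s,\Sym^2\phi_\rho\otimes\style{S}(j))$ can have a pole at $s=0$ only when $j=1$. Finally, $\style{S}(1)$ is the restriction to $\SL(2,\C)$ of the canonical self-pairing of $\style{S}(k)$, which is symmetric when $k$ is odd and alternating when $k$ is even; thus $\style{S}(1)$ occurs (with multiplicity one) in $\Sym^2\style{S}(k)$ for $k$ odd and in $\wedge^2\style{S}(k)$ for $k$ even. Combining these observations,
\[
L(s,\pi,\wedge^2)\ \text{has a pole at}\ s=0 \iff
\begin{cases}
L(s,\rho,\wedge^2)\ \text{has a pole at}\ s=0, & k\ \text{odd},\\[2pt]
L(s,\rho,\Sym^2)\ \text{has a pole at}\ s=0, & k\ \text{even},
\end{cases}
\]
which is part \eqref{thm-matringe-lin-even} and most of part (1).

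To finish part (1) I would invoke the two facts recalled in the excerpt. First, the exterior square $L$-function of a supercuspidal representation of $G_r$ cannot have a pole at $s=0$ when $r$ is odd; so if $k$ is odd and $\pi$ is $H_m$-distinguished, the displayed equivalence forces $L(s,\rho,\wedge^2)$ to have a pole at $s=0$, hence $r$ must be even. Second, once $r$ is even, $\rho$ is itself a discrete series representation of $G_r$, so \Cref{thm-matringe-ext-square} applied to $G_r$ gives that $L(s,\rho,\wedge^2)$ has a pole at $s=0$ if and only if $\rho$ is $H_r$-distinguished, closing the chain of equivalences. (As a consistency check, one can note that the Jacquet--Rallis theorem forces $\rho\cong\widetilde\rho$ in the distinguished case, so by Shahidi's factorization $L(s,\rho\times\rho)=L(s,\rho,\wedge^2)L(s,\rho,\Sym^2)$ and the simple pole of $L(s,\rho\times\rho)$ at $s=0$, exactly one of $L(s,\rho,\wedge^2)$, $L(s,\rho,\Sym^2)$ has a pole there, matching the orthogonal/symplectic dichotomy for $\rho$ and for the parameter $\phi_\rho\otimes\style{S}(k)$.)

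I expect the main obstacle to be organizational rather than deep: one must be careful with the Clebsch--Gordan parity (i.e.\ which of $\Sym^2\style{S}(k)$, $\wedge^2\style{S}(k)$ contains $\style{S}(1)$) and with the precise normalization of $L(s,\tau\otimes\style{S}(j))$. Reassuringly, the conclusion that only the $j=1$ summand contributes a pole at $s=0$ is insensitive to the sign convention, since the poles of $L(s,\wedge^2\phi_\rho)$ and $L(s,\Sym^2\phi_\rho)$ all sit on $\Re(s)=0$ and any twist by $|\cdot|^{\pm(j-1)/2}$ moves them off $s=0$ unless $j=1$. The genuinely hard input --- the equivalence between $H$-distinction of an arbitrary discrete series and a pole at $s=0$ of its exterior square $L$-function --- is exactly \Cref{thm-matringe-ext-square}, which we may assume.
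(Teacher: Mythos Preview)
The paper does not give its own proof of this statement: \Cref{thm-matringe-lin} is simply quoted as \cite[Theorem 6.1]{matringe2014a}, alongside \Cref{thm-matringe-ext-square} which is \cite[Proposition 6.1]{matringe2014a}. So there is no in-paper argument to compare against.

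That said, your proposal is a correct and self-contained deduction of \Cref{thm-matringe-lin} from \Cref{thm-matringe-ext-square}. The plethysm $\wedge^2(\phi_\rho\otimes\style S(k)) = (\wedge^2\phi_\rho\otimes\Sym^2\style S(k))\oplus(\Sym^2\phi_\rho\otimes\wedge^2\style S(k))$ is the right identity; your Clebsch--Gordan bookkeeping is correct (so that $\style S(1)$ sits in $\Sym^2\style S(k)$ for $k$ odd and in $\wedge^2\style S(k)$ for $k$ even); the formula $L(s,\tau\otimes\style S(j))=L(s+\tfrac{j-1}{2},\tau)$ is the standard $L$-factor of a Weil--Deligne representation; and the boundedness of $\phi_\rho$ does force all poles of $L(s,\wedge^2\phi_\rho)$ and $L(s,\Sym^2\phi_\rho)$ onto $\Re(s)=0$, so only the $j=1$ summand can contribute a pole at $s=0$. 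The final appeal to \Cref{thm-matringe-ext-square} at the level of $G_r$ to close the equivalence with $H_r$-distinction of $\rho$ is exactly right. One cosmetic point: the clause ``$r$ must be even'' when $k$ is odd is pure arithmetic (since $m=kr$ is even), not a consequence of distinction; your detour through the pole of $L(s,\rho,\wedge^2)$ is unnecessary there, though harmless.

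In effect you have shown that, granting the hard input \Cref{thm-matringe-ext-square}, the refined statement \Cref{thm-matringe-lin} follows formally from the LLC and elementary $\SL(2,\C)$ representation theory. This is a genuinely useful observation, since the paper treats the two Matringe results as independent black boxes; your argument makes explicit that the second is a corollary of the first once one passes to parameters.
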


The author has studied relative discrete series for several $p$-adic symmetric spaces and carried out a systematic construction of relative discrete series in the papers \cite{smith2018b,smith2018}.  The following result forms part of \cite[Theorem 6.3]{smith2018}.  As the work \cite{smith2018} relies on \cite{kato--takano2008}, we further assume that the residual characteristic of $F$ is odd.

\begin{thm}\label{thm-smith}
Suppose that $F$ has odd residual characteristic.
Let $(m_1,\ldots,m_d)$ be a partition of $2n$ such that each $m_i$, $1\leq i \leq d$ is an even integer.
Let $\tau_i$, $1 \leq i \leq d$, be pairwise inequivalent $H_{m_i}$-distinguished discrete series representations of $G_{m_i}$.
The parabolically induced representation $\pi = \tau_1 \times \ldots \times \tau_d$ is a relative discrete series representation. 
That is, $\pi$ occurs in the discrete spectrum of $X=H\backslash G$.
\end{thm}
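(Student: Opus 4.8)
The strategy is to verify the relative Casselman square-integrability criterion of Kato and Takano \cite{kato--takano2008,kato--takano2010}: an irreducible $H$-distinguished representation $(\pi,V)$ with nonzero linear period $\lambda\in\Hom_H(\pi,1)$ lies in the discrete spectrum of $X=H\backslash G$ precisely when, for every proper $\theta$-split parabolic subgroup $Q=LU$ of $G$ (so that $L=Q\cap\theta(Q)$ is its $\theta$-stable Levi subgroup), the real part of every exponent of the $\lambda$-relevant quotient of the Jacquet module $V_U$ lies in the open negative cone of the $\theta$-split part of $\mathfrak{a}_L^{*}$. The proof then breaks into two parts: showing that $\pi=\tau_1\times\cdots\times\tau_d$ is irreducible and $H$-distinguished, and controlling its exponents along all proper $\theta$-split parabolics.

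For the first part, irreducibility follows from the pairwise inequivalence of the $\tau_i$ together with the explicit description of distinguished discrete series afforded by \Cref{thm-matringe-ext-square} and \Cref{thm-matringe-lin} (which forces the Zelevinsky segments of the $\tau_i$ to be unlinked), and $\pi$ is tempered, being parabolically induced from a discrete series of the standard Levi subgroup $M=G_{m_1}\times\cdots\times G_{m_d}$. For $H$-distinction one may argue via the Shalika model: each $\tau_i$, being an $H_{m_i}$-distinguished discrete series of $G_{m_i}$ with $m_i$ even, admits a nonzero Shalika functional, and by compatibility of the Shalika model with parabolic induction (cf. \cite{matringe2014a}) these assemble into a Shalika functional on $\pi$; the Jacquet--Rallis implication then makes $\pi$ $H$-distinguished, with $\dim\Hom_H(\pi,1)=1$ by \cite{jacquet--rallis1996}. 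Alternatively, one analyses the Bruhat filtration of $\pi|_H$ attached to the $H$-orbits on $P\backslash G$, using the inequivalence of the $\tau_i$ to isolate the surviving open-orbit contribution.

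For the second part, choose the realization of the involution so that $\theta$ is conjugation by an element of $M$; then $P=P_{(m_1,\ldots,m_d)}$ is $\theta$-stable, $M\cap H$ is the product of the groups $H_{m_i}$ (using that each $m_i$ is even), and $\mathfrak{a}_M$ is $\theta$-fixed. Fix a proper $\theta$-split parabolic $Q=LU$ and compute $V_U$ by the Bernstein--Zelevinsky geometric lemma: it carries a filtration whose subquotients are parabolically induced inside $L$ from twists of the Jacquet modules $r_{M'}(\tau_i)$, indexed by the double cosets $W_M\backslash W_G/W_L$. The exponents of $V_U$ are accordingly sums of central exponents of discrete-series constituents of the $r_{M'}(\tau_i)$ -- which, by Casselman's criterion applied to each $\tau_i$, are strictly negative except along the central directions $\mathfrak{a}_M$, on which they vanish because the $\tau_i$ have unitary central character -- together with the $\nu$-shifts coming from $\delta_P^{1/2}$ and from the geometric lemma. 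The key point is that, $\mathfrak{a}_M$ being $\theta$-fixed, these ``borderline'' directions are annihilated when restricted to the $\theta$-split part of $\mathfrak{a}_L$; combining this with the self-duality of $\pi$ (guaranteed by \cite{jacquet--rallis1996}) and with the structure theory of $\theta$-split parabolic subgroups and of the spherical roots of $X$ as used in \cite{kato--takano2008,smith2018}, the verification reduces to a rank-one computation in which the surviving part of every exponent is a strictly negative multiple of a normalized spherical root, which is exactly what the criterion demands.

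The principal obstacle is this second part: one must enumerate the proper $\theta$-split parabolic subgroups of $\GL_{2n}$ attached to this involution, push the explicit Jacquet modules of the generalized Steinberg representations $\tau_i$ through the geometric lemma, and verify the strict negativity of the restricted exponents in every case. The delicate ingredient is the identification of the borderline (unitary central) directions with the $\theta$-fixed ones; it is precisely here that the evenness of the $m_i$ -- which underlies both the existence of the Shalika functionals in the first part and the description of $M\cap H$ in the second -- is used essentially.
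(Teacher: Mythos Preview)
This theorem is not proved in the present paper: it is quoted verbatim as \cite[Theorem~6.3]{smith2018} and stated without argument, so there is no proof here against which to compare your sketch.

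That said, your outline is faithful to the strategy actually carried out in \cite{smith2018}: one verifies the Kato--Takano relative Casselman criterion \cite{kato--takano2008,kato--takano2010} by first checking that $\pi$ is irreducible and $H$-distinguished, and then controlling the exponents of the Jacquet modules along all proper $\theta$-split parabolic subgroups via the Bernstein--Zelevinsky geometric lemma, reducing ultimately to the ordinary Casselman criterion for the discrete series inducing data $\tau_i$. Two minor remarks. First, irreducibility of $\tau_1\times\cdots\times\tau_d$ does not need \Cref{thm-matringe-ext-square} or \Cref{thm-matringe-lin}: for unitary discrete series of general linear groups the underlying Zelevinsky segments are centred at the origin and hence automatically pairwise unlinked, so the induced representation is irreducible regardless of the distinction hypothesis (the pairwise inequivalence is needed later, for the exponent analysis, not here). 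Second, in \cite{smith2018} the $H$-distinction of $\pi$ is obtained by the open-orbit/Mackey-theory argument you mention as an alternative, rather than by passing through Shalika models; either route is fine for a sketch.
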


The aim of the current work is to prove that the local parameters of the relative discrete series representations described in \Cref{thm-smith} satisfy \Cref{conj-sv}.

\section{$X$-distinguished $X$-elliptic parameters}\label{sec-parameters}
Next we record a description of the dual group $G_X^\vee$ attached to $X = H\backslash G$ and choose a representative for the distinguished morphism $\xi: G_X^\vee \times \SL(2,\C) \rightarrow G^\vee$.
The existence of the distinguished morphism is now known in full generality by the work of Knop and Schalke \cite{knop--schalke2017}.
\Cref{lem-dual-morph} is well known, thus we omit the proof which 
 amounts to a routine calculation of the restricted root system attached to $X$ (see \cite[Sections 3.1 and 5.1]{smith2018}, \textit{cf.}~\cite[Table 3]{knop--schalke2017}).    We note that the fact that the distinguished morphism $\xi: G_X^\vee \times \SL(2,\C) \rightarrow G^\vee$ is trivial on the $\SL(2,\C)$-factor follows from the fact that a minimal $\theta$-split parabolic subgroup of $G$ is a Borel subgroup $P_0$, with $\theta$-stable Levi $A_0 = M_0$,  and the principal morphism of $\mathrm{SL}(2,\C)$ into $M_0^\vee = A_0^*$ is trivial (\textit{cf}.~\Cref{def-dist-mor}).

\begin{lem}\label{lem-dual-morph}
The complex dual group $G_X^\vee$ associated to $X$ is the symplectic group $\mathrm{Sp}(2n,\C)$.  The distinguished morphism $\xi: G_X^\vee \times \SL(2,\C) \rightarrow G^\vee$ is trivial on the $\SL(2,\C)$ factor and is given by the inclusion map into $G^\vee = \mathrm{GL}(2n,\C)$ on the $G_X^\vee$ component.
\end{lem}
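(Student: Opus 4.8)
The plan is to read $G_X^\vee$ and a representative of the distinguished morphism off the restricted root datum of the symmetric pair, and then to verify \Cref{def-dist-mor} directly. Realize $\Hbf = \G^\theta$ with $\theta = \Int(w)$, where $w \in \GL_{2n}(F)$ is the permutation matrix interchanging the $i$-th and $(n+i)$-th coordinate for $1 \le i \le n$; then $\Hbf = Z_\G(w) \cong \GL_n(F)\times\GL_n(F)$, and $\theta$ acts on the diagonal torus $T$ of $\G$ by swapping the two blocks of coordinates. A maximal $\theta$-split torus is $S_0 = \{\diag(s_1,\dots,s_n,s_1^{-1},\dots,s_n^{-1})\} \subset T$; a direct check (carried out in \cite[Sections 3.1 and 5.1]{smith2018}) shows that no root of $T$ in $\G$ vanishes on $S_0$, so $\Mbf_0 = Z_\G(S_0) = T$ is a maximal torus, and the restriction to $S_0$ of the roots of $T$ in $\G$ is the root system $\{\pm\epsilon_i\pm\epsilon_j\ (i\ne j),\ \pm2\epsilon_i\}$ of type $C_n$, with little Weyl group the hyperoctahedral group $W(C_n)$. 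Choosing the base point $x_0$ (equivalently the conjugate of $\Hbf$, which is why we take $\Hbf = Z_\G(w)$) and the Borel $\mathbf{B}\supset T$ compatibly, one gets $\Pbf_0 = \mathbf{B}$ (a minimal $\theta$-split parabolic, necessarily a Borel since $\Mbf_0 = T$), $\Abf_0 = T$, and $\Abf_X = \Abf_0/(\Abf_0\cap\Hbf)$ of rank $n$, with $\Abf_0\cap\Hbf = T^\theta$ the maximal subtorus of $T$ on which $\theta$ is trivial.

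Next I would identify $G_X^\vee$ from this data. On cocharacter lattices the quotient $\Abf_0 \to \Abf_X$ is $X_*(T) = \Z^{2n} \to \Z^n$, $(b_1,\dots,b_{2n}) \mapsto (b_1 - b_{n+1}, \dots, b_n - b_{2n})$, so dually the canonical map $A_X^* \to A^* = T^\vee$ is $(\C^\times)^n \to (\C^\times)^{2n}$, $(x_1,\dots,x_n) \mapsto (x_1,\dots,x_n,x_1^{-1},\dots,x_n^{-1})$ --- precisely the standard maximal torus of $\mathrm{Sp}(2n,\C)$ inside $\mathrm{GL}(2n,\C)$. Computing the normalized spherical roots from the $C_n$ restricted system, one finds that $\Delta_X$ has $n$ elements: the $n-1$ short simple restricted roots give $\gamma_0 = (f_i - f_{i+1}) + (f_{n+i+1} - f_{n+i})$, $1 \le i \le n-1$ (a sum of two orthogonal roots of $T$ in $\G$ lying in a common simple system, where $f_a$ denotes the $a$-th coordinate character), while the long simple restricted root $2\epsilon_n$ gives $\gamma_0 = f_n - f_{2n}$ (a single root of $T$ in $\G$). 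Feeding $\Abf_X$ and these spherical roots into the Sakellaridis--Venkatesh / Knop--Schalke prescription for the dual group of a spherical variety (\textit{cf.}~\cite[Table 3]{knop--schalke2017}) yields the connected reductive group with maximal torus $A_X^*$ and root system of type $C_n$ whose torus embeds in $A^*$ as above; this is $\mathrm{Sp}(2n,\C)$, realized in $\mathrm{GL}(2n,\C)$ as the stabilizer of the alternating form $\sum_{i=1}^n e_i \wedge e_{n+i}$.

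It then remains to verify that $\xi\colon \mathrm{Sp}(2n,\C)\times\SL(2,\C)\to\mathrm{GL}(2n,\C)$, trivial on the $\SL(2,\C)$ factor and the above inclusion on the $\mathrm{Sp}(2n,\C)$ factor, satisfies \Cref{def-dist-mor}; uniqueness up to $A^*$-conjugacy \cite[Proposition 3.4.3]{sakellaridis--venkatesh2017} then identifies it as the distinguished morphism. Condition (3) holds because $\Mbf_0 = T$ is a torus, so $2\varrho_{M_0} = 0$ and the principal morphism of $\SL(2,\C)$ into $M_0^\vee$ is trivial. Condition (1) holds because on the standard maximal torus of $\mathrm{Sp}(2n,\C)$ the inclusion into $\mathrm{GL}(2n,\C)$ is exactly the map $A_X^* \to A^*$ computed above. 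Condition (2) is a single root-space check: under $\mathrm{Sp}(2n,\C)\hookrightarrow\mathrm{GL}(2n,\C)$ the simple root space of $\mathfrak{sp}(2n,\C)$ attached to $\gamma_0$ lands, for the short $\gamma_0$'s, in the two-dimensional space $\mathfrak{g}^\vee_{f_i - f_{i+1}}\oplus\mathfrak{g}^\vee_{f_{n+i+1} - f_{n+i}}$ and, for the long one, in $\mathfrak{g}^\vee_{f_n - f_{2n}}$, i.e.\ exactly the sum of the root spaces of the associated roots of $\gamma_0$.

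The whole argument is bookkeeping; the two places where care is needed are: (i) choosing the base point $x_0$ --- equivalently the conjugate of $\Hbf$ --- so that $\Abf_X$, the open $\mathbf{B}$-orbit, and the spherical roots are all computed with respect to the same data; and (ii) matching the Sakellaridis--Venkatesh normalization conventions for the spherical roots, in particular for the one attached to the long restricted root $2\epsilon_n$, since this is exactly what distinguishes $\mathrm{Sp}(2n,\C)$ (root system $C_n$) from $\mathrm{SO}(2n+1,\C)$ (root system $B_n$), both of rank $n$. A minor point is that conditions (1) and (2) must hold for one and the same $\xi$, but once the torus embedding in (1) is fixed the inclusion $\mathrm{Sp}(2n,\C)\hookrightarrow\mathrm{GL}(2n,\C)$ is essentially forced, so this is automatic; I expect (ii) to be the real subtlety.
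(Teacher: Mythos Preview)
Your proposal is correct and follows exactly the approach the paper indicates: the paper omits the proof, saying it ``amounts to a routine calculation of the restricted root system attached to $X$'' with references to \cite[Sections 3.1 and 5.1]{smith2018} and \cite[Table 3]{knop--schalke2017}, and singles out the triviality on the $\SL(2,\C)$ factor as coming from $\Mbf_0 = \Abf_0$. You have carried out precisely that computation---identifying the $C_n$ restricted root system, the torus embedding $A_X^* \hookrightarrow A^*$, the normalized spherical roots, and then verifying the three conditions of \Cref{def-dist-mor}---so there is nothing to add.
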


Since the distinguished morphism $\xi: G_X^\vee \times \SL(2,\C) \rightarrow G^\vee$ is trivial on the $\SL(2,\C)$-factor, \Cref{conj-sv} predicts that an $X$-distinguished $A$-parameter $\psi: \lf\times \SL(2,\C)\rightarrow G^\vee$ must also be trivial on the Arthur $\SL(2)$.  That is, \Cref{conj-sv} predicts that the relative discrete series representations for $X$ should be tempered representations of $G$.
This is in agreement with the fact that the symmetric space $X$ is known to be tempered, that is, the Plancherel measure of $L^2(X)$ is supported on tempered representations of $G$ \cite[Proposition 2.7.1]{beuzart-plessis2018a}, \cite[Corollary 1.2]{gurevich--offen2016}.
Moreover, we may restrict our attention to tempered $L$-parameters for $G$ instead of dealing with more general Arthur parameters.

Let $\phi: \lf \rightarrow G^\vee$ be an $X$-distinguished $L$-parameter, i.e., $\phi\otimes 1 : \lf \times \SL(2,\C) \rightarrow G^\vee$ is an $X$-distinguished $A$-parameter. 
Thus $\phi$ is an $L$-parameter for $G$ that has image contained in $G_X^\vee = \mathrm{Sp}(2n,\C)$ (up to conjugacy), and since $\SO(2n+1)^\vee = \mathrm{Sp}(2n,\C)$, 
 we may regard $\phi$ as an $L$-parameter for the split special orthogonal group $\SO_{2n+1}(F)$.
We say that the parameter $\phi$ is $X$-elliptic if the image of $\phi$ is not contained in any proper parabolic subgroup of $G_X^\vee$.
In particular, if $\phi$ is $X$-elliptic, then the parameter $\phi : \lf \rightarrow G_X^\vee$ corresponds to a finite set (an $L$-packet) $\Pi_\phi$ of essential discrete series representation of  $\SO_{2n+1}(F)$ \cite[Proposition 6.6.5]{Arthur-book}.
\Cref{conj-sv} predicts that the discrete spectrum of $X$ is contained in the image of the local Langlands functorial transfers, determined by the inclusion $\mathrm{Sp}(2n,\C) \hookrightarrow \mathrm{GL}(2n,\C)$, of representations in the discrete spectrum of $\SO_{2n+1}(F)$.
In fact, it is known that an irreducible tempered representation $\pi$ of $G$ is $H$-distinguished if and only if $\pi$ is the local Langlands functorial lift of a generic discrete series representation of $\SO_{2n+1}(F)$ \cite[Theorem 2.1]{jiang--soudry2004}, \cite[Theorem 3.1]{matringe2015b}.

Let $\tau = \st(k,\rho)$ be a generalized Steinberg representation, where $\rho$ is an irreducible unitary supercuspidal representation of $G_r$.  Let $m = kr$. 
By the LLC for the general linear group \cite{Harris--Taylor-book, henniart2000, scholze2013}, the $L$-parameter $\phi_{\tau}: \lf \rightarrow \mathrm{GL}(m,\C)$ of the generalized Steinberg representation $\tau = \st(k,\rho)$ is equal to $\phi_{\tau} = \phi_{\rho} \otimes \style{S}(k)$, where $\phi_{\rho}: \wf \rightarrow \mathrm{GL}(r,\C)$ is the 
(irreducible) $L$-parameter of the supercuspidal representation $\rho$ and $\style{S}(k)$ is the unique irreducible $k$-dimensional representation of $\SL(2,\C)$.
The following proposition is a consequence of \Cref{thm-matringe-ext-square} \cite[Theorem 6.1]{matringe2014a}.

\begin{Prop}\label{prop-dist-gen-st-parameter}
Let $k,r\geq 2$ be integers such that $m=kr$ is even.  Let $\rho$ be an irreducible self-contragredient supercuspidal representation of $G_r$.  Let $\tau = \st(k,\rho)$ be the generalized Steinberg representation of $G_{m}$ attached to $k$ and $\rho$.  If $\tau$ is $H_{m}$-distinguished, then the image of the $L$-parameter $\phi_\tau$ is contained in the complex symplectic group $\mathrm{Sp}(m,\C)$.
\end{Prop}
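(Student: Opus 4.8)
The plan is to derive the proposition from Matringe's exterior square criterion (\Cref{thm-matringe-ext-square}) together with the standard dictionary between poles of exterior square $L$-functions and the orthogonal/symplectic type of a self-dual Langlands parameter. First I would record the structural facts about $\phi_\tau$. By the LLC for $G_m$ and the hypothesis on $\tau$, the parameter $\phi_\tau=\phi_\rho\otimes\style{S}(k)$ is the external tensor product of the irreducible representation $\phi_\rho$ of $\wf$ with the irreducible representation $\style{S}(k)$ of $\SL(2,\C)$, hence is an irreducible representation of $\lf=\wf\times\SL(2,\C)$; moreover it is self-dual, since $\rho$ is self-contragredient and $\style{S}(k)$ is self-dual (equivalently, $\tau$ is $H_m$-distinguished, hence self-contragredient by Jacquet--Rallis). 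An irreducible self-dual representation carries a nonzero invariant bilinear form, unique up to scalar by Schur's lemma, and this form is either symmetric or alternating; thus $\phi_\tau$ is either orthogonal or symplectic and exactly one of these holds, equivalently exactly one of $\Sym^2\phi_\tau$ and $\wedge^2\phi_\tau$ contains the trivial representation of $\lf$. The image of $\phi_\tau$ is contained, up to conjugacy, in $\mathrm{Sp}(m,\C)$ precisely when $\phi_\tau$ is symplectic, that is, precisely when $\wedge^2\phi_\tau$ contains the trivial representation.

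Next I would connect this to the exterior square $L$-function. By definition via the LLC, $L(s,\tau,\wedge^2)=L(s,\wedge^2\phi_\tau)$ is the $L$-function of the Weil--Deligne representation $\wedge^2\phi_\tau$. Using the identity $\wedge^2(\phi_\rho\otimes\style{S}(k))=\bigl(\wedge^2\phi_\rho\otimes\Sym^2\style{S}(k)\bigr)\oplus\bigl(\Sym^2\phi_\rho\otimes\wedge^2\style{S}(k)\bigr)$ together with the decomposition of $\Sym^2\style{S}(k)$ and $\wedge^2\style{S}(k)$ into odd-dimensional irreducibles $\style{S}(2j+1)$, one writes $\wedge^2\phi_\tau$ as a direct sum of summands $\sigma_i\otimes\style{S}(n_i)$ with $\sigma_i$ irreducible and bounded on $\wf$ (boundedness because $\tau$, being a discrete series representation, has tempered parameter). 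Since $L(s,\wedge^2\phi_\tau)=\prod_i L\bigl(s+\tfrac{n_i-1}{2},\sigma_i\bigr)$ and each $\sigma_i$ is bounded, the only summand that can contribute a pole at $s=0$ is one with $\sigma_i$ trivial and $n_i=1$, i.e.\ a copy of the trivial representation of $\lf$. Hence $L(s,\tau,\wedge^2)$ has a pole at $s=0$ if and only if $\wedge^2\phi_\tau$ contains the trivial representation of $\lf$.

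Finally I would invoke \Cref{thm-matringe-ext-square}: since $\tau=\st(k,\rho)$ is a square integrable representation of $G_m$, it gives that $\tau$ is $H_m$-distinguished if and only if $L(s,\tau,\wedge^2)$ has a pole at $s=0$. Combining this with the two preceding paragraphs yields $\tau$ $H_m$-distinguished $\Rightarrow$ $\wedge^2\phi_\tau$ contains the trivial representation $\Rightarrow$ $\phi_\tau$ is symplectic $\Rightarrow$ the image of $\phi_\tau$ lies in $\mathrm{Sp}(m,\C)$, which is the claim. (One could instead argue by the parity of $k$, using \Cref{thm-matringe-lin}: for $k$ odd, $\style{S}(k)$ is orthogonal and $\tau$ is distinguished iff $\phi_\rho$ is symplectic, while for $k$ even, $\style{S}(k)$ is symplectic and $\tau$ is distinguished iff $\phi_\rho$ is orthogonal; in both cases $\phi_\rho\otimes\style{S}(k)$ is symplectic.) I expect the only genuine---though routine---obstacle to be the bookkeeping of the middle paragraph: verifying that a pole at $s=0$ of the Artin $L$-function detects exactly the trivial subrepresentation of $\wedge^2\phi_\tau$, which is where temperedness of $\tau$ is used to exclude spurious poles from unramified twists. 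The remaining steps are immediate from Matringe's theorem and Schur's lemma.
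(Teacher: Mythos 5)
Your argument is correct, but it takes a genuinely different route from the paper's. The paper splits into cases according to the parity of $k$ and determines the type (orthogonal or symplectic) of the supercuspidal parameter $\phi_\rho$ first: for $k$ odd it invokes \Cref{thm-matringe-lin} together with the Jiang--Nien--Qin functoriality theorem to see that $\phi_\rho$ is symplectic, and for $k$ even it uses the pole of $L(s,\rho,\Sym^2)$, the Jacquet--Piatetskii-Shapiro--Shalika pole of $L(s,\rho\times\rho)$, and Shahidi's factorization \eqref{eq-RS-L-fcn-factors} to see that $\phi_\rho$ is orthogonal; in each case the conclusion follows because the tensor product of an orthogonal and a symplectic representation is symplectic --- this is essentially the parenthetical alternative you sketch at the end. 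Your main argument instead applies Matringe's criterion (\Cref{thm-matringe-ext-square}) directly to $\tau$ and translates the pole of $L(s,\tau,\wedge^2)$ at $s=0$ into the statement that $\wedge^2\phi_\tau$ contains the trivial representation of $\lf$, hence that the irreducible self-dual parameter $\phi_\tau$ is symplectic. This is more uniform (no case analysis, no appeal to \cite{jiang--nien--qin2008}), but it leans on two points worth making explicit: (i) that Matringe's exterior square $L$-function coincides with the Galois-side one for discrete series, which is exactly the content of the Henniart and Kewat--Raghunathan results the paper records just before \Cref{thm-matringe-ext-square}; and (ii) the bookkeeping with $L(s,\sigma\otimes\style{S}(n))=L(s+\tfrac{n-1}{2},\sigma)$ and the boundedness of $\phi_\rho$ (which holds since self-contragredience forces $\rho$ to be unitary) --- you have correctly identified this as the only real work, and it goes through. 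The paper's route yields slightly more refined information (the type of $\phi_\rho$ itself and the link to functorial transfer from odd special orthogonal groups); yours is shorter and makes the mechanism ``pole of $\wedge^2$ at $s=0$ equals symplectic type'' transparent.
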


\begin{proof}
As above, the $L$-parameter of $\tau$ is equal to $\phi_\tau = \phi_\rho \otimes \style{S}(k)$.

\begin{enumerate}
	\item If $k$ is odd, then $r$ is even.  By assumption $\tau$ is $H_{m}$-distinguished; therefore, by \Cref{thm-matringe-lin}, $\rho$ is $H_{r}$-distinguished.  By \cite[Theorem 5.5]{jiang--nien--qin2008}, $\rho$ is a local Langlands functorial transfer from $\mathbf{SO}_{r+1}(F)$.  In particular, $\rho$ is of symplectic type and its $L$-parameter $\phi_\rho$ has image contained in $\mathrm{Sp}(r,\C)$.  When $k$ is odd, the image of $\SL(2,\C)$ under $\style{S}(k)$ is contained in the complex orthogonal group $\mathrm{O}(k,\C)$.  It follows that the image of $\phi_\tau$ preserves a non-degenerate skew-symmetric bilinear form on $\C^r \otimes \C^k \cong \C^m$ given by the tensor product of the non-degenerate skew-symmetric bilinear form on $\C^r$ preserved by $\ran \phi_\rho$ and the non-degenerate symmetric bilinear form on $\C^k$ preserved by $\ran \style{S}(k)$. That is, $\ran \phi_\tau$ is contained in $\mathrm{Sp}(m,\C)$.
	\item If $k$ is even, then the image of $\SL(2,\C)$ is contained in the symplectic group $\mathrm{Sp}(k,\C)$.  Since $\tau$ is $H_{m}$-distinguished; it follows from \Cref{thm-matringe-ext-square}, that the symmetric square $L$-function $L(s,\rho, \Sym^2)$ has a pole at $s=0$.  In this case, $\rho \cong \wt \rho$ is self-contragredient and the exterior square $L$-function $L(s,\rho,\wedge^2)$ does not have a pole at $s=0$ \cite{jacquet--piatetskii-shapiro--shalika1983}, \cite[Lemma 3.6]{shahidi1992}.  It follows that $\phi_\rho$ is an irreducible self-dual $r$-dimensional representation of $\wf$ such that $\ran \phi_\rho$ is not contained in a symplectic group \cite[Theorem 5.5]{jiang--nien--qin2008}.  Following \cite[Section 1.2]{Arthur-book}, one may readily observe that $\ran \phi_\rho$ must then be contained in the complex orthogonal group $\mathrm{O}(r,\C)$.  As above, the image of $\phi_\tau$ preserves a non-degenerate skew-symmetric bilinear form on $\C^m$ and $\ran \phi_\tau$ is contained in $\mathrm{Sp}(m,\C)$.
\end{enumerate}
\end{proof}

\begin{thm}\label{thm-rds-dist-parameter}					
Let $\pi$ be an $H$-distinguished relative discrete series representation of $G$ constructed via \Cref{thm-smith}.
The image of the $L$-parameter $\phi_\pi$ of $\pi$ is contained in the symplectic group $\mathrm{Sp}(2n,\C)$; moreover, the image of $\phi_\pi$ is not contained in any proper parabolic subgroup of $\mathrm{Sp}(2n,\C)$.
\end{thm}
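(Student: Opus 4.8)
The plan is to combine the description $\pi = \tau_1 \times \cdots \times \tau_d$ from \Cref{thm-smith} with the compatibility of the LLC for general linear groups with parabolic induction, so that the $L$-parameter of $\pi$ is the direct sum $\phi_\pi = \phi_{\tau_1} \oplus \cdots \oplus \phi_{\tau_d}$, realized on $\C^{2n} = \bigoplus_{i=1}^{d} \C^{m_i}$. Each $\tau_i$ is a discrete series representation of $G_{m_i}$, hence of the form $\tau_i = \st(k_i,\rho_i)$ for an irreducible unitary supercuspidal $\rho_i$ of $G_{r_i}$ with $m_i = k_i r_i$, so that $\phi_{\tau_i} = \phi_{\rho_i} \otimes \style{S}(k_i)$ is an irreducible representation of $\lf$.

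For the first assertion I would show that each summand $\phi_{\tau_i}$ already lands in a symplectic group. Since $\tau_i$ is $H_{m_i}$-distinguished and square-integrable, \Cref{thm-matringe-ext-square} shows that $L(s,\tau_i,\wedge^2)$ has a pole at $s=0$; as recalled in \Cref{sec-rds}, this $L$-function coincides with the exterior square $L$-function of $\phi_{\tau_i}$ attached via the LLC. For an irreducible representation $\phi$ of $\lf$ with bounded restriction to $\wf$, the function $L(s,\phi,\wedge^2)$ has a pole at $s=0$ precisely when $\phi$ is self-dual of symplectic type, the pole detecting a copy of the trivial representation inside $\wedge^2 \phi$. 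Hence each $\phi_{\tau_i}$ preserves a non-degenerate alternating form on $\C^{m_i}$, so $\phi_\pi$ preserves the orthogonal direct sum of these forms, a non-degenerate alternating form on $\C^{2n}$; thus $\ran \phi_\pi$ is contained in $\mathrm{Sp}(2n,\C)$ up to conjugacy. (For $k_i, r_i \geq 2$ this is precisely \Cref{prop-dist-gen-st-parameter}; the remaining cases, $\tau_i$ supercuspidal or $\tau_i = \st(k_i,\chi_i)$ with $k_i$ even and $\chi_i$ a quadratic character, are handled in the same way.)

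For the second assertion I would verify that $\phi_\pi$ is an elliptic parameter for $\mathrm{Sp}(2n,\C)$. The crucial hypothesis of \Cref{thm-smith} is that the $\tau_i$ are pairwise inequivalent; since the LLC for general linear groups is a bijection, the irreducible parameters $\phi_{\tau_1},\ldots,\phi_{\tau_d}$ are then pairwise non-isomorphic. Thus $\phi_\pi$ is a multiplicity-free direct sum of pairwise-inequivalent irreducible self-dual representations, each of symplectic type, and a direct computation of its centralizer gives $\mathrm{Cent}_{\mathrm{Sp}(2n,\C)}(\ran \phi_\pi) \cong (\Z/2\Z)^{d}$, which is finite; equivalently, $\phi_\pi$ is a discrete parameter in the sense of Arthur, corresponding to a discrete series $L$-packet of $\SO_{2n+1}(F)$. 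Since $\mathrm{Sp}(2n,\C)$ has finite center, the image of a homomorphism into $\mathrm{Sp}(2n,\C)$ is contained in a proper parabolic subgroup if and only if its centralizer is positive-dimensional; as this centralizer is finite, $\ran \phi_\pi$ lies in no proper parabolic of $\mathrm{Sp}(2n,\C)$, which is the claim.

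In the end the proof assembles known inputs — the behaviour of the LLC under parabolic induction, Matringe's exterior-square criterion (\Cref{thm-matringe-ext-square}), the dictionary between poles of exterior square $L$-functions and symplectic self-duality, and the structure of centralizers in $\mathrm{Sp}(2n,\C)$ — so no single step is a genuine obstacle. The point requiring the most care is the bookkeeping: checking that the distinction of each $\tau_i$ forces the symplectic (rather than orthogonal) type of $\phi_{\tau_i}$, and that the pairwise inequivalence of the $\tau_i$ is exactly what yields the multiplicity-freeness needed for ellipticity.
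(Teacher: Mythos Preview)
Your proof is correct and follows the same overall plan as the paper: decompose $\phi_\pi = \bigoplus_i \phi_{\tau_i}$ via compatibility of the LLC with parabolic induction, show each summand is of symplectic type, then verify ellipticity in $\mathrm{Sp}(2n,\C)$. The tactical choices differ in two places. For the symplectic-type step, the paper invokes \Cref{prop-dist-gen-st-parameter}, which splits on the parity of $k_i$ and analyses $\rho_i$ via \Cref{thm-matringe-lin}; you instead apply \Cref{thm-matringe-ext-square} directly to $\tau_i$ and read off symplectic self-duality of $\phi_{\tau_i}$ from the pole of $L(s,\phi_{\tau_i},\wedge^2)$ at $s=0$, which is more uniform and, as you note, also covers the edge cases $k_i=1$ or $r_i=1$ that the hypotheses of \Cref{prop-dist-gen-st-parameter} formally exclude. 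For ellipticity, the paper argues by contradiction from the shape of Levi subgroups of $\mathrm{Sp}(2n,\C)$: factoring through a proper Levi would force a decomposition $\phi_\pi = \phi_1 \oplus \phi_0 \oplus \phi_1^\vee$ with $\phi_1 \neq 0$, which is impossible for a multiplicity-free sum of pairwise non-isomorphic self-dual irreducibles. You instead compute the centralizer in $\mathrm{Sp}(2n,\C)$ as $(\Z/2\Z)^d$ and invoke the finite-centralizer criterion. Both ellipticity arguments are standard and essentially interchangeable; yours is marginally more conceptual, the paper's marginally more hands-on.
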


\begin{proof}
By assumption, $\pi = \tau_1 \times \ldots \times \tau_d$, where $2n = \sum_{i=1}^d m_i$ and each $m_i = k_ir_i$ is an even integer, and $\tau_i$ are pairwise inequivalent $H_{m_i}$-distinguished discrete series representation of $G_{m_i}$, for all $1\leq i \leq d$.  Moreover, each $\tau_i= \st(k_i,\rho_i)$ is a generalized Steinberg representation, where $\rho_i$ is an irreducible unitary self-contragredient supercuspidal representation of $G_{r_i}$.
The compatibility of the LLC with parabolic induction gives that $L$-parameter of $\pi$ is equal to the direct sum
\begin{align}\label{phi-pi-direct-sum}
	\phi_\pi &= \phi_{\tau_1} \oplus \ldots \oplus \phi_{\tau_d}.
\end{align}
By \Cref{prop-dist-gen-st-parameter}, $\ran \phi_{\tau_i} \subset \mathrm{Sp}(m_i,\C)$, for all $1\leq i \leq d$.
Each of the parameters $\phi_{\tau_i}$ are elliptic in $\mathrm{GL}(m_i,\C)$ and thus are also elliptic in $\mathrm{Sp}(m_i,\C)$ \cite[Lemma 3.1]{gurevich--offen2016}.
Moreover, since $\tau_i$ is determined by $\phi_{\tau_i}$ up to conjugacy in $\mathrm{GL}(m_i,\C)$, we may assume that $\ran \phi_{\tau_i}$ is contained in the symplectic group determined by the non-singular skew-symmetric matrix $J_{m_i}'$, where
\begin{align*}
J_{m}' & = \left( \begin{matrix}
 		0 & J_{m/2} \\
 		-J_{m/2} & 0 	
 \end{matrix} \right)
 & \text{and} & & J_k & =  \left( \begin{matrix} & & 1 \\ & \iddots & \\ 1 & & \end{matrix} \right),
\end{align*}
for any even integer $m\geq 2$ and any $k \geq 1$.
That is, for an even integer $m$, we realize the symplectic group $\mathrm{Sp}(m,\C)$ as the subgroup
\begin{align*}
\mathrm{Sp}(J_m') & = \{ g \in \mathrm{GL}(m,\C) : \tran{g} J_m' g = J_m' \}
\end{align*}
of $\mathrm{GL}(m,\C)$.
It follows that the image of the $L$-parameter $\phi_\pi$ is contained in the subgroup
	$\prod_{i=1}^d \mathrm{Sp}(J_{m_i}')$
of the symplectic group $\mathrm{Sp}(J_{2n}'') = \mathrm{Sp}(2n,\C)$ in $G^\vee = \mathrm{GL}(2n,\C)$, where $\mathrm{Sp}(J_{2n}'')$ is determined by the non-singular skew-symmetric matrix $J_{2n}'' = J_{m_1}' \oplus \ldots \oplus J_{m_d}'$.
In fact, $J_{2n}'' = w_+^{-1}J_{2n}'w_+$, where $w_+$ is the permutation matrix corresponding to the permutation of $2n$ given by $2i-1\mapsto i$, and $2i\mapsto 2n+1-i$, for $1\leq i \leq n$.
Thus $\mathrm{Sp}(J_{2n}'') = w_+^{-1} \mathrm{Sp}(J_{2n}') w_+$; moreover, $w_+ (\ran \phi_\pi) w_+^{-1} \subset \mathrm{Sp}(J_{2n}')$.\footnote{Recall that $\pi$ is determined by $\phi_\pi$ up to $G^\vee$-conjugacy.  The choice to work with $\mathrm{Sp}(2n,\C) = \mathrm{Sp}(J_{2n}')$ in what follows is convenient for working with block-upper triangular parabolic subgroups.}

It now remains to show that $\phi_\pi$ is elliptic in $\mathrm{Sp}(2n,\C)$.
We argue by contraction.  Suppose that $\phi_\pi$ factors through a (proper) maximal parabolic subgroup $P$ of $\mathrm{Sp}(2n,\C)$.  
Since each $\tau_i$ is a discrete series representation, the representations $\phi_{\tau_i}$ of $\lf$ are irreducible; in particular, $\phi_\pi$ is semisimple and must factor through a Levi component $L$ of $P$.
Up to conjugacy, $L$ has the form
\begin{align*}
L = \left\{ \left(\begin{matrix}	x & & \\ & y & \\ & & {J_m'}^{-1} \tran{x}^{-1} J_m' \end{matrix} \right) : x \in \mathrm{GL}(m,\C), y \in \mathrm{Sp}(2k,\C) \right \},
\end{align*}
for some integers $m, k$ so that $n=m+k$ and $m\geq 1$.
It follows that $\phi_\pi$ can be decomposed as the direct sum $\phi_\pi = \phi_1 \oplus \phi_0 \oplus \phi_1^\vee$, where $\phi_1$ and $\phi_0$ are finite dimensional representations of $\lf$, $\phi_1 \neq 0$, and $\phi_1^\vee$ denotes the contragredient of $\phi_1$.
By assumption, $\phi_\pi$ is the direct sum of self-dual non-isomorphic irreducible representations \eqref{phi-pi-direct-sum}; therefore, $\phi_\pi$ cannot be decomposed as $\phi_\pi = \phi_1 \oplus \phi_0 \oplus \phi_1^\vee$ and this is a contradiction.
  We conclude that $\ran \phi_\pi$ cannot be contained in a proper parabolic subgroup of $\mathrm{Sp}(2n,\C)$, that is, $\phi_\pi$ is elliptic in $\mathrm{Sp}(2n,\C)$.
\end{proof}

\Cref{thm-rds-dist-parameter} says precisely that the $L$-parameters of the known relative discrete series representations for $X=H\backslash G$ are $X$-distinguished and $X$-elliptic.
That is, the relative discrete series for  $X$ produced via \Cref{thm-smith} satisfy \Cref{conj-sv}.

\begin{rmk}
	 The last paragraph of the proof of \Cref{thm-rds-dist-parameter}, together with \Cref{conj-sv}, suggests that we cannot relax the regularity condition imposed in \Cref{thm-smith} (\textit{cf}.~\cite[Remark 6.6]{smith2018}). 
	 The author expects that the representations constructed in \Cref{thm-smith} exhaust the discrete spectrum of $X$, as predicted by \Cref{conj-sv}; however, the author does not have a proof of this fact. 
\end{rmk}

\subsection*{Acknowledgments}

The author is deeply grateful the anonymous referees for numerous helpful comments, corrections, and the suggested improvements to the proof of Theorem 3.4 that greatly increased its clarity.
Thank you also to Clifton Cunningham for helpful conversations on Arthur parameters and $L$-packets.


\providecommand{\bysame}{\leavevmode\hbox to3em{\hrulefill}\thinspace}
\providecommand{\MR}{\relax\ifhmode\unskip\space\fi MR }
\providecommand{\MRhref}[2]{%
  \href{http://www.ams.org/mathscinet-getitem?mr=#1}{#2}
}
\providecommand{\href}[2]{#2}

\end{document}